\theoremstyle{plain}
\newtheorem{theorem}{Theorem}[section]
\newtheorem*{theorem*}{Theorem}
\newtheorem{proposition}[theorem]{Proposition}
\begin{document}

\journal{(internal report CC24-11)}

\begin{frontmatter}

\title{Solutions of inhomogeneous linear difference equations using Green's functions}

\author[cc]{S.~R.~Mane}
\ead{srmane001@gmail.com}
\address[cc]{Convergent Computing Inc., P.~O.~Box 561, Shoreham, NY 11786, USA}

\begin{abstract}
We present a general formula for the particular solution of an inhomogeneous linear difference equation with variable coefficients.
The answer is expressed as a weighted sum of fundamental solutions of the associated linear difference equation.
This corresponds to an initial value problem in the case of linear differential equations.
We remark that Green's functions are naturally suited for solving such problems.
This note presents a Green's function formalism to solve an inhomogeneous linear difference equation with variable coefficients.
Both the retarded and advanced Green's functions are required, to obtain a complete solution.
We independently confirm previous work for the case of linear difference equations with constant coefficients.
\end{abstract}

\begin{keyword}
  linear difference equations
  \sep recurrences
  \sep Green's functions

\MSC[2020]{
39A06  
\sep 11B37  
\sep 34B27  
}

\end{keyword}

\end{frontmatter}

\newpage
\setcounter{equation}{0}
\section{Introduction}\label{sec:intro}
It is well-known that the formalism of Green's functions furnishes a general method to derive a particular solution of an inhomogeneous linear differential equation \cite{JJ}.
Symbolically, let the inhomogeneous linear differential equation be $L[f(x)] = r(x)$, where $L$ is a linear differential operator, and $r(x)$ does not depend on $f(x)$.
The Green's function $G(x,x^\prime)$ is defined to satisfy the equation $L[G(x,x^\prime)] = \delta(x-x^\prime)$.
A particular solution $P(x)$ of the original equation is given by $P(x) = \int G(x,x^\prime) r(x^\prime)\,dx^\prime$.
Suitable boundary conditions must be specified to make $G(x,x^\prime)$ well defined, e.g.~a retarded or advanced Green's function (also the end-points for the above integral).
It is also known that if the linear differential operator $L[\cdot]$ has constant coefficients, then $G(x,x^\prime)$ is a function of only the difference $x-x^\prime$.

It is also well-known that there are close connections between the theories of linear differential equations and linear difference equations.
For example, a characteristic polynomial is employed to solve both linear differential equations with constant coefficients and linear difference equations with constant coefficients.
Less is known about linear difference equations with variable coefficients.
There are many papers on techniques to derive the solutions of such equations.
The textbook by Kelley and Peterson \cite{Kelley_Petersen} presents an exposition on linear difference equations.
The papers by Birkhoff \cite{Birkhoff} and Trjitzinsky \cite{Trjitzinsky} are noteworthy.
See also the papers by
Harris and Sibuya \cite{Harris_Sibuya},
Van der Cruyssen \cite{Cruyssen} and
Wimp and Zeilberger \cite{WimpZeilberger}.
Culmer and Harris \cite{Culmer_Harris} formulated the problem in matrix form, as a set of coupled first-order difference equations.
A notable more recent reference was published by Mallik \cite{Mallik},
who derived explicit solutions as nested sums of combinatorial expressions over the coefficients of the difference equation.
Mallik treated both linear difference equations of finite order $N\in\mathbb{Z}_+$ and of unbounded order.
See also the solution using a companion matrix for the case of difference equations of finite order $N$ (\cite{Mallik}, eq.~(16)).

Our goal in this note is more modest.
We offer a general formula for the particular solution of an inhomogeneous linear difference equation with variable coefficients.
We employ a Green's function formalism to do so, borrowing ideas from the theory of linear differential equations.
Wolfram \cite{Wolfram2000} published a general formula for the particular solution of an inhomogeneous linear difference equation with constant coefficients.
The case of constant coefficients is an important special case, and it will be shown below that in this case our expression matches Wolfram's solution.

\setcounter{equation}{0}
\section{Linear difference equation}\label{sec:lineq}
Consider the following inhomogeneous linear difference equation of order $d\ge2$
\begin{equation}
\label{eq:diff_eq_inhom}
c_0(n)f(n) +c_1(n)f(n-1) +\dots +c_d(n)f(n-d) = r(n) \,.
\end{equation}
Here $n$ is an integer, and $f$, $r$ and $c_i$, $i=0,\dots,d$ are real or complex valued functions of $n$.
Note that we allow the value of $n$ to be negative.
(For example, the combinatorial formulas by Mallik \cite{Mallik} for $f(n)$ require $n$ to be nonnegative.)
\emph{Linearity:} both $c_i(n)$ and $r(n)$ can depend on $n$, but they cannot depend on $f$.
\emph{Note:} To avoid complications and needless special cases below, we demand that the first and last coefficients be nonzero for all $n$,
i.e.~$c_0(n)\ne0$ and $c_d(n)\ne0$ for all $n\in\mathbb{Z}$.
The corresponding homogeneous linear difference equation is
\begin{equation}
\label{eq:diff_eq_hom}
c_0(n)f(n) +c_1(n)f(n-1) +\dots +c_d(n)f(n-d) = 0 \,.
\end{equation}
\emph{Fundamental solutions:} It is known that any set of $d$ linearly independent solutions of eq.~\eqref{eq:diff_eq_hom},
say $\{F_i(n),\,i=0,\dots,d-1\}$ constitute a set of fundamental solutions, by which is meant that any solution of eq.~\eqref{eq:diff_eq_hom}
can be expressed as a linear combination of the $F_i(n)$.
\emph{Canonical basis solutions:} 
Let us denote the set of `canonical basis' solutions of eq.~\eqref{eq:diff_eq_hom} by $\{B_i(n),\,i=0,\dots,d-1\}$,
where the initial values are $B_i(n) = \delta_{i,n}$ for $n=0,\dots,d-1$.
The canonical basis solutions are an important set of fundamental solutions, but not the only possibility.
Next, the complementary solution of eq.~\eqref{eq:diff_eq_inhom} is any solution of the homogeneous equation eq.~\eqref{eq:diff_eq_hom}.
Given a set of initial values $f(n)=\alpha_n$ for $n=0,\dots,d-1$ for eq.~\eqref{eq:diff_eq_inhom},
we shall specify the complementary solution $C(n)$ to be
\begin{equation}
\label{eq:cs}
C(n) = \sum_{i=0}^{d-1} \alpha_iB_i(n) \,.
\end{equation}
Hence $C(n)=\alpha_n$ for $n=0,\dots,d-1$.
This choice for $C(n)$ implies that the particular solution of eq.~\eqref{eq:diff_eq_inhom}, say $P(n)$, vanishes for $n=0,\dots,d-1$.
This policy determines the expression for the particular solution uniquely.
The full solution of eq.~\eqref{eq:diff_eq_inhom} is the sum of the complementary and particular solutions, i.e.
\begin{equation}
f(n) = C(n) + P(n) \,.
\end{equation}
We employ a Green's function formalism to obtain the particular solution of eq.~\eqref{eq:diff_eq_inhom}.
Since we constrain $P(n)=0$ for $n=0,\dots,d-1$, we must solve for $P(n)$ in two nonoverlapping zones (i) $n\ge d$ and (ii) $n<0$.  

\subsection{Retarded Green's function}
For $n\ge d$, we work upwards from $n=d$ to $n\to\infty$.
Hence we employ the retarded Green's function, say $G_r(n,m)$.
Here $G_r(n,m)$ is the solution of the following inhomogeneous linear difference equation
\begin{equation}
\label{eq:diff_eq_Gret}
c_0(n)G_r(n,m) +c_1(n)G_r(n-1,m) +\dots +c_d(n)G_r(n-d,m) = \delta_{m,n} \,.
\end{equation}
The conditions on $G_r(n,m)$ are as follows.
\begin{enumerate}
\item
  If $n\in[m-d+1,m-1]$, then $G_r(n,m)=0$.
\item
  If $n=m$, then $G_r(n,m)=1/c_0(m)$. (Recall we demand $c_0(m)\ne0$.)
\item
  If $n>m$, then $G_r(n,m)$ satisfies the homogenous difference equation eq.~\eqref{eq:diff_eq_hom}.
\end{enumerate}
The reason for the first condition above is that the recurrence is of order $d$,
hence we can only demand that $G_r(n,m)$ vanish in the interval of $d-1$ consecutive values
from $m-1$ downwards.
The second condition is to match the Green's function to the Kronecker $\delta$.
The first and second conditions together supply a set of $d$ consecutive boundary conditions, which make the expression for $G_r(n,m)$ unique.
The third condition implies $G_r(n,m)$ is a linear combination of the fundamental solutions $F_i(n)$, $i=0,\dots,d-1$.
The retarded Green's function is a ratio of two determinants. First define the determinant
\begin{equation}
\label{eq:det_r}  
\Delta_r(n) = \left|\begin{array}{llll}
F_0(m-d+1) & \dots & F_{d-1}(m-d+1) \\
\vdots & \dots & \vdots \\
F_0(m-1) & \dots & F_{d-1}(m-1) \\
F_0(n) & \dots & F_{d-1}(n) \end{array}\right|\,.
\end{equation}
Then
\begin{equation}
\label{eq:Gret_sol}
G_r(n,m) = \frac{1}{c_0(m)}\frac{\Delta_r(n)}{\Delta_r(m)} \,.
\end{equation}
Observe that this linear combination satisfies the constraints for $n\in[m-d+1,m]$.
Then the particular solution of eq.~\eqref{eq:diff_eq_inhom}, say $P_r(n)$, is given by the sum
\begin{equation}
\label{eq:ps_ret}
 P_r(n) = \sum_{m=d}^n G_r(n,m)r(m) \qquad (n\ge d)\,.
\end{equation}
The sum on the right-hand side is the analog of an integral in the theory of linear differential equations.
The upper limit is $m=n$ because $G_r(n,m)=0$ if $n<m$.
The lower limit is $m=d$ because we constrain $P_r(n)=0$ for $n=0,\dots,d-1$.

\subsection{Advanced Green's function}
For $n<0$, we work downwards from $n=-1$ to $n\to-\infty$.
Hence we employ the advanced Green's function, say $G_a(n,m)$.
First, we shift $n\gets n+d$ in eq.~\eqref{eq:diff_eq_Gret} to obtain
\begin{equation}
c_0(n+d)G_a(n+d,m) +c_1(n+d)G_a(n+d-1,m) +\dots +c_d(n+d)G_a(n,m) = \delta_{m,n} \,.
\end{equation}
This is because we work downwards in $n$, so we want $G_a(n,m)$ in the \emph{last} term.
The conditions on $G_a(n,m)$ are as follows (note that $m$ is negative).
\begin{enumerate}
\item
  If $n\in[m+1,m+d-1]$, then $G_a(n,m)=0$.
\item
  If $n=m$, then $G_a(n,m)=1/c_d(m+d)$. (Recall we demand $c_d(m+d)\ne0$.)
\item
  If $n<m$, then $G_a(n,m)$ satisfies the homogenous difference equation eq.~\eqref{eq:diff_eq_hom}.
\end{enumerate}
The reason for the first condition above is that the recurrence is of order $d$,
hence we can only demand that $G_r(n,m)$ vanish in the interval of $d-1$ consecutive values from $m+1$ upwards.
The second condition is to match the Green's function to the Kronecker $\delta$.
The first and second conditions together supply a set of $d$ consecutive boundary conditions, which make the expression for $G_a(n,m)$ unique.
The third condition implies $G_a(n,m)$ is a linear combination of the fundamental solutions $F_i(n)$, $i=0,\dots,d-1$.
The advanced Green's function is a ratio of two determinants. First define the determinant
\begin{equation}
\label{eq:det_a}  
\Delta_a(n) = \left|\begin{array}{llll}
F_0(m+d-1) & \dots & F_{d-1}(m+d-1) \\
\vdots & \dots & \vdots \\
F_0(m+1) & \dots & F_{d-1}(m+1) \\
F_0(n) & \dots & F_{d-1}(n) \end{array}\right|\,.
\end{equation}
Then
\begin{equation}
\label{eq:Gadv_sol}
G_a(n,m) = \frac{1}{c_d(m+d)}\frac{\Delta_a(n)}{\Delta_a(m)} \,.
\end{equation}
Observe that this linear combination satisfies the constraints for $n\in[m,m+d-1]$.
Then the particular solution of eq.~\eqref{eq:diff_eq_inhom}, say $P_a(n)$, is given by the sum
\begin{equation}
\label{eq:ps_adv}
P_a(n) = \sum_{m=n}^{-1} G_a(n,m)r(m+d) \,.
\end{equation}
The sum on the right-hand side is the analog of an integral in the theory of linear differential equations.
The lower limit is $m=n$ because $G_a(n,m)=0$ if $n>m$.
The upper limit is $m=-1$ because we constrain $P_a(n)=0$ for $n=0,\dots,d-1$.

\subsection{Full solution}
From the foregoing, the full solution of eq.~\eqref{eq:diff_eq_inhom} is
\begin{equation}
\label{eq:full_sol}
f(n) = \begin{cases} C(n) & \qquad (0 \le n \le d-1) \,, \\
  \displaystyle C(n) +\sum_{m=d}^n G_r(n,m)r(m) & \qquad (n \ge d) \,, \\
  \displaystyle C(n) +\sum_{m=n}^{-1} G_a(n,m)r(m+d) & \qquad (n < 0) \,.
\end{cases}
\end{equation}

\subsection{Comment on determinants}
It is not necessary to specify any particular choice for the fundamental solutions $F_i(n)$ to construct the determinants
$\Delta_r(n)$ and $\Delta_a(n)$ in eqs.~\eqref{eq:det_r} and \eqref{eq:det_a} above.
The use of the canonical basis functions $B_i(n)$ is useful but not necessary.
Since every solution of the homogeneous equation eq.~\eqref{eq:diff_eq_hom} is a linear combination of the $F_i(n)$,
the \emph{ratio} of determinants in eqs.~\eqref{eq:Gret_sol} and \eqref{eq:Gadv_sol} remains unchanged
if we employ \emph{any} set of $d$ linearly independent solutions of eq.~\eqref{eq:diff_eq_hom}.
This fact will be helpful when constructing example solutions below.

\setcounter{equation}{0}
\section{Wronskian \&\ Casoratian}\label{sec:W_C}
The denominator determinants in eqs.~\eqref{eq:Gret_sol} and \eqref{eq:Gadv_sol} are Casoratians (discrete analogs of Wronskians).
There is a close connection of the determinants $\Delta_r(n)$ and $\Delta_a(n)$ for linear difference equations and Wronskians for linear ordinary differential equations.
Consider the following inhomogeneous linear ordinary differential equation 
\begin{equation}
\label{eq:ode}
\gamma_0(x)y^{(d)}(x) +\gamma_1(x)y^{(d-1)}(x) +\dots +\gamma_{d-1}(x)y^{(1)}(x) +\gamma_d(x)y(x) = \rho(x) \,.
\end{equation}
Here $x\in\mathbb{R}$ is the independent variable, $y(x)$ is the dependent variable and $y^{(i)}(x) = d^iy/dx^i$ denotes the $i^{th}$ derivative with respect to $x$.
Also $\gamma_i(x)$, $i=0,\dots,d$ and $\rho(x)$ are functions which depend on $x$ but not $y$.
To avoid complications, we assume all the $\gamma_i(x)$ and $\rho(x)$ are continuous functions of $x$ and $\gamma_0(x)\ne0$.
The corresponding homogeneous differential equation is
\begin{equation}
\label{eq:ode_hom}
\gamma_0(x)y^{(d)}(x) +\gamma_1(x)y^{(d-1)}(x) +\dots +\gamma_{d-1}(x)y^{(1)}(x) +\gamma_d(x)y(x) = 0 \,.
\end{equation}
Let $\{Y_i(x),\, i=0,\dots,d-1\}$ be a set of $d$ linearly independent solutions, i.e.~a set of fundamental solutions, of eq.~\eqref{eq:ode_hom}.
The differential equation for the Green's function $G(x,\xi)$ is
\begin{equation}
\label{eq:ode_G}
\gamma_0(x)G^{(d)}(x) +\gamma_1(x)G^{(d-1)}(x) +\dots +\gamma_{d-1}(x)G^{(1)}(x) +\gamma_d(x)G(x) = \delta(x-\xi) \,.
\end{equation}
Define the Wronskian at $x=\xi$ as follows
\begin{equation}
\label{eq:W_xi}
W(\xi) = \left|\begin{array}{lllll} Y_0(\xi) & \dots & Y_{d-1}(\xi) \\ \vdots & \dots & \vdots \\
Y_0^{(d-2)}(\xi) & \dots & Y_{d-1}^{(d-2)}(\xi) \\
Y_0^{(d-1)}(\xi) & \dots & Y_{d-1}^{(d-1)}(\xi) \end{array}\right|\,.
\end{equation}
Also define the following numerator determinant, which is a function of both $x$ and $\xi$:
\begin{equation}
\label{eq:N_xi}
N(x,\xi) = \left|\begin{array}{lllll} Y_0(\xi) & \dots & Y_{d-1}(\xi) \\ \vdots & \dots & \vdots \\
Y_0^{(d-2)}(\xi) & \dots & Y_{d-1}^{(d-2)}(\xi) \\
Y_0(x) & \dots & Y_{d-1}(x) \end{array}\right|\,.
\end{equation}
For the retarded Green's function, say $G_r(x,\xi)$,
we demand that $G_r(x,\xi)=0$ for $x<\xi$ and $G_r(x,\xi)$ satisfies eq.~\eqref{eq:ode_hom} for $x>\xi$.
The retarded Green's function is given by
\begin{equation}
\label{eq:Gret_ode}
G_r(x,\xi) = \begin{cases} 0 & \qquad (x < \xi)\,, \\ \displaystyle \frac{1}{\gamma_0(\xi)}\frac{N(x,\xi)}{W(\xi)} & \qquad (x \ge \xi) \,. \end{cases}
\end{equation}
This is analogous to eq.~\eqref{eq:Gret_sol} for the difference equation.
For the advanced Green's function, say $G_a(x,\xi)$,
we demand that $G_a(x,\xi)=0$ for $x>\xi$ and $G_a(x,\xi)$ satisfies eq.~\eqref{eq:ode_hom} for $x<\xi$.
The advanced Green's function is given by
\begin{equation}
\label{eq:Gadv_ode}
G_a(x,\xi) = \begin{cases} \displaystyle -\frac{1}{\gamma_0(\xi)}\frac{N(x,\xi)}{W(\xi)} & \qquad (x \le \xi)\,, \\ 0 & \qquad (x > \xi)\,. \end{cases}
\end{equation}
This is slightly different from eq.~\eqref{eq:Gadv_sol} for the difference equation,
because for a differential equation, the function value $y(x)$ and all its derivatives are evaluated at the same value of $x$.
Hence the denominator is $\gamma_0(\xi)$ for both the retarded and advanced Green's functions of a linear differential equation.

\emph{Comment:}
Note that we said $x\in\mathbb{R}$ above.
We could also have said the independent variable is complex $z\in\mathbb{C}$,
but then we would have to specify a contour or path of integration in the complex plane, to define the concepts of `retarded' and `advanced' Green's functions.

\setcounter{equation}{0}
\section{Constant coefficients}\label{sec:constcoeff}
Wolfram \cite{Wolfram2000} published a general solution for linear difference equations with constant coefficients.
Wolfram's linear difference equation for a function $f(n)$ is (\cite{Wolfram2000}, eq.~(1.1))
\begin{equation}
\label{eq:W_rec}
f(n) -\sum_{\ell=1}^d a_{d-\ell} f(n-\ell) = r(n) \,.
\end{equation}
Here $a_i$, $i=0,\dots,d-1$ are a set of constant coefficients.
We must have $a_0\ne0$, else the recurrence would not be of order $d$. The other $a_i$ can be zero, for $i=1,\dots,d-1$.  
Also $r(n)$ is an inhomogeneous term, which can depend on $n$ but not on $f$.
Compare with eq.~\eqref{eq:diff_eq_inhom}, then $c_0(n)=1$ and $c_\ell(n) = -a_{d-\ell}$ for $\ell=1,\dots,d$. 
Observe that $c_0(n)\ne0$ and $c_d(n)=-a_0\ne0$.
The corresponding homogeneous linear difference equation is
\begin{equation}
\label{eq:W_rec_homog}
f(n) -\sum_{\ell=1}^d a_{d-\ell} f(n-\ell) = 0 \,.
\end{equation}
In this section, the `canonical basis' solutions $B_i(n)$, $i=0,\dots,d-1$ refer to eq.~\eqref{eq:W_rec_homog}.
There are standard techniques to determine the $B_i$ in this case, e.g.~by finding the roots of the characteristic equation of eq.~\eqref{eq:W_rec_homog}.
We do not discuss the matter here.
Suppose again the initial values are $\{\alpha_0,\dots,\alpha_{d-1}\}$ for $n=0,\dots,d-1$ and the complementary solution is $C(n)$
as in eq.~\eqref{eq:cs},
\begin{equation}
C(n) = \sum_{i=0}^{d-1} \alpha_iB_i(n) \,.
\end{equation}
This choice for $C(n)$ implies that the particular solution of eq.~\eqref{eq:W_rec} vanishes for $n=0,\dots,d-1$.
This policy determines the expression for the particular solution uniquely.
Wolfram published a general solution for $f(n)$, with the above constraints.
We express Wolfram's solution (\cite{Wolfram2000}, eq.~(2.2)) as follows.
\begin{equation}
\label{eq:W_my_sol}
f(n) = \begin{cases} C(n) & \qquad (0 \le n \le d-1) \,, \\
  \displaystyle C(n) +\sum_{i=0}^{n-d} r(n-i)B_{d-1}(d-1+i) & \qquad (n \ge d) \,, \\
  \displaystyle C(n) -\sum_{i=1}^{-n} r(n+d+i-1)B_{d-1}(-i) & \qquad (n < 0) \,.
\end{cases}
\end{equation}
We shall derive eq.~\eqref{eq:W_my_sol} using Green's functions.
\begin{proposition}
The retarded Green's function is given by
\begin{equation}
\label{eq:Gret_const_coeff}
G_r(n,m) = B_{d-1}(d-1+n-m) \,.
\end{equation}
\end{proposition}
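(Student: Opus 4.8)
The plan is to evaluate the determinant formula \eqref{eq:Gret_sol} directly, using the fact noted earlier that the ratio $\Delta_r(n)/\Delta_r(m)$ does not depend on which $d$ linearly independent solutions of the homogeneous equation are used to build the determinants. For the constant-coefficient equation \eqref{eq:W_rec_homog} I would insert the shifted canonical basis $F_i(n):=B_i(n-m+d-1)$, $i=0,\dots,d-1$, chosen so that the arguments $m-d+1,\dots,m-1$ filling the first $d-1$ rows of \eqref{eq:det_r} are carried to $0,\dots,d-2$. The one fact that must be checked before doing this is that the $F_i$ really are solutions of \eqref{eq:W_rec_homog}; this is exactly where constant coefficients enter, because substituting $n\to n+k$ into \eqref{eq:W_rec_homog} shows that every integer shift of a homogeneous solution is again a homogeneous solution, and shifting obviously preserves linear independence.

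With this choice the defining property $B_i(j)=\delta_{i,j}$ (valid for $j=0,\dots,d-1$) turns the first $d-1$ rows of the matrix in \eqref{eq:det_r} into the first $d-1$ standard unit row vectors, so that matrix is block triangular: an identity block of size $d-1$ in the upper left, a zero entry directly above the lower-right corner, and bottom row $\bigl(B_0(n-m+d-1),\dots,B_{d-1}(n-m+d-1)\bigr)$. Expanding along the last column then gives $\Delta_r(n)=B_{d-1}(n-m+d-1)$, and in particular $\Delta_r(m)=B_{d-1}(d-1)=1$. Since Wolfram's normalization has $c_0(n)\equiv1$, eq.~\eqref{eq:Gret_sol} collapses to $G_r(n,m)=B_{d-1}(d-1+n-m)$, which is the assertion.

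A shorter but essentially equivalent route avoids determinants altogether: put $g(n):=B_{d-1}(d-1+n-m)$ and verify the three conditions that characterize $G_r(\cdot,m)$ uniquely. The first holds because for $n\in[m-d+1,m-1]$ the argument $d-1+n-m$ lies in $[0,d-2]$, where $B_{d-1}$ vanishes; the second holds because $g(m)=B_{d-1}(d-1)=1=1/c_0(m)$; the third holds because $g$ is an integer shift of the homogeneous solution $B_{d-1}$ and hence, again by constant-coefficient translation invariance, solves \eqref{eq:W_rec_homog} for every $n$, in particular for $n>m$. Uniqueness of the retarded Green's function then forces $G_r(n,m)=g(n)$.

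I do not anticipate a real obstacle; the statement is at bottom a bookkeeping identity. The two things that need attention are (i) the appeal to constant coefficients to guarantee that shifts of homogeneous solutions remain homogeneous solutions — this is precisely what makes $G_r$ depend on $n-m$ alone — and (ii) lining up the shift $d-1+n-m$ with the support of $B_{d-1}$ and with the normalization $c_0\equiv1$, so that all the prefactors collapse to unity.
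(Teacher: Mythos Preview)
Your proposal is correct, and both of the routes you outline are exactly those in the paper: the ``shorter route'' verifying the three defining conditions is the paper's actual proof of the proposition, while your determinant computation with the shifted canonical basis $F_i(n)=B_i(n-m+d-1)$ is reproduced almost verbatim later in the same section as an alternative derivation. The only cosmetic difference is ordering --- the paper leads with the verification and relegates the determinant calculation to a follow-up remark, whereas you do the reverse.
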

\begin{proof}
The right-hand side of eq.~\eqref{eq:Gret_const_coeff}
equals (i) zero for $n\in[m-d,m-1]$, (ii) $1$ for $n=m$ and (iii) satisfies the homogeneous recurrence eq.~\eqref{eq:diff_eq_hom} for $n>m$.
\end{proof}
Then the particular solution is, from eq.~\eqref{eq:ps_ret},
\begin{equation}
P_r(n) = \sum_{m=d}^n r(m)G_r(n,m) = \sum_{m=d}^n r(m)B_{d-1}(d-1+n-m) \,.
\end{equation}
Set $i=n-m$, then we obtain
\begin{equation}
P(n) = \sum_{i=0}^{n-d} r(n-i)B_{d-1}(d-1+i) \,.
\end{equation}
This equals the expression in the second line of eq.~\eqref{eq:W_my_sol}.
\begin{proposition}
The advanced Green's function is given by
\begin{equation}
\label{eq:Gadv_const_coeff}
G_a(n,m) = -B_{d-1}(n-m-1) \,.
\end{equation}
\end{proposition}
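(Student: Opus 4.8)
The plan is to verify, just as in the proof of the preceding proposition, that the right-hand side of~\eqref{eq:Gadv_const_coeff} satisfies the three conditions defining the advanced Green's function $G_a(n,m)$ in Section~\ref{sec:lineq}; since those conditions determine $G_a$ uniquely on the index range that enters~\eqref{eq:ps_adv}, this establishes the claim. Throughout I use the correspondence $c_0(n)=1$ and $c_\ell(n)=-a_{d-\ell}$, so that $c_d(m+d)=-a_0$, and recall that here $B_{d-1}$ is the canonical basis solution of~\eqref{eq:W_rec_homog} normalized by $B_{d-1}(k)=\delta_{d-1,k}$ for $k=0,\dots,d-1$.

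The third condition (that $G_a(n,m)$ obeys the homogeneous recurrence for $n<m$) I would dispatch first. Because the coefficients are constant, \eqref{eq:W_rec_homog} is invariant under integer shifts of $n$, and $B_{d-1}$ extends uniquely to a solution on all of $\mathbb{Z}$ (the backward extension being legitimate precisely because $a_0\neq0$); hence $n\mapsto -B_{d-1}(n-m-1)$ is again a solution of~\eqref{eq:W_rec_homog} for each fixed $m$, in particular for $n<m$. The first condition ($G_a(n,m)=0$ for $n\in[m+1,m+d-1]$) is then immediate, since on that range the argument $n-m-1$ runs over $\{0,1,\dots,d-2\}$, where $B_{d-1}$ vanishes by its normalization.

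The one genuine computation, and the step I would single out as the (mild) crux, is the second condition, $G_a(m,m)=1/c_d(m+d)=-1/a_0$: it forces an evaluation of $B_{d-1}$ at the negative argument $-1$, which is also the sole place where $a_0\neq0$ is actually used. I would obtain it by running~\eqref{eq:W_rec_homog} one step backwards: written in the form $a_0 f(n-d)=f(n)-\sum_{\ell=1}^{d-1}a_{d-\ell}f(n-\ell)$, setting $n=d-1$ gives $a_0 B_{d-1}(-1)=B_{d-1}(d-1)-\sum_{\ell=1}^{d-1}a_{d-\ell}B_{d-1}(d-1-\ell)=1-0=1$, so $B_{d-1}(-1)=1/a_0$ and therefore $-B_{d-1}(m-m-1)=-1/a_0=1/c_d(m+d)$. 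Note this is where the minus sign in~\eqref{eq:Gadv_const_coeff} comes from: unlike the retarded case, the value to match at $n=m$ is $1/c_d(m+d)=-1/a_0$ rather than $1/c_0(m)=1$.

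With the three conditions verified,~\eqref{eq:Gadv_const_coeff} follows. As a closing parallel with the retarded computation above, substituting~\eqref{eq:Gadv_const_coeff} into~\eqref{eq:ps_adv} and setting $i=m-n+1$ rewrites the advanced particular solution as $P_a(n)=-\sum_{i=1}^{-n}r(n+d+i-1)\,B_{d-1}(-i)$, which is exactly the third line of Wolfram's solution~\eqref{eq:W_my_sol}.
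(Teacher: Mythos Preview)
Your proof is correct and follows essentially the same route as the paper: both verify the three defining conditions on $G_a$, and the only nontrivial step---computing $B_{d-1}(-1)=1/a_0$ by evaluating the homogeneous recurrence at $n=d-1$---is carried out identically. Your additional remarks on translation invariance and the role of $a_0\neq0$ in the backward extension are welcome clarifications but do not change the argument.
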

\begin{proof}
First set $n=d-1$ in eq.~\eqref{eq:diff_eq_hom} to deduce (recall $B_{d-1}(d-1)=1$ and $B_{d-1}(n)=0$ for $n=0,\dots,d-2$)
\begin{equation}
\begin{split}
  0 &= B_{d-1}(d-1) -a_{d-1}B_{d-1}(d-2) -\dots -a_0(n)B_{d-1}(-1) 
  \\
  &= 1 -a_0B_{d-1}(-1) \,.
\end{split}
\end{equation}
Hence $B_{d-1}(-1) = 1/a_0$.  
Then the right-hand side of eq.~\eqref{eq:Gadv_const_coeff}
is (i) zero for $0\ge n>m$, (ii) $-1/a_0$ for $n=m$ and (iii) satisfies the homogeneous recurrence eq.~\eqref{eq:diff_eq_hom} for $n<m$.
\end{proof}  
Then the particular solution is, from eq.~\eqref{eq:ps_adv},
\begin{equation}
P_a(n) = -\sum_{m=n}^{-1} r(m+d)B_{d-1}(n-m-1) \,.
\end{equation}
Set $i=1-n+m$, then we obtain
\begin{equation}
P_a(n) = -\sum_{i=1}^{-n} r(n+d+i-1)B_{d-1}(-i) \,.
\end{equation}
This equals the expression in the last line of eq.~\eqref{eq:W_my_sol}.

Let us derive the above expressions for the retarded and advanced Green's function using the formalism in Sec.~\ref{sec:lineq}.
First observe that if the coefficients are constant,
the homogeneous equation eq.~\eqref{eq:W_rec_homog} is translation invariant,
in the sense that $g(n) = f(n+j)$ also satisfies eq.~\eqref{eq:W_rec_homog}, for all $j\in\mathbb{Z}$.
Then the expressions for the Green's functions simplify.
For the retarded Green's function, instead of treating $\delta_{m,n}$, shift all the indices by $-m+d-1$ and treat $\delta_{d-1,n}$.
Also employ the canonical basis functions.
Observe that
\begin{equation}
\Delta_r(n-m+d-1) = \left|\begin{array}{llll}
B_0(0) & \dots & B_{d-1}(0) \\
\vdots & \dots & \vdots \\
B_0(d-2) & \dots & B_{d-1}(d-2) \\
B_0(n-m+d-1) & \dots & B_{d-1}(n-m+d-1) \end{array}\right| \,.
\end{equation}
The only nonzero terms in the first $d-1$ rows are the diagonal terms, and they all equal unity.
\emph{This is why we require the canonical basis functions, to simplify the structure of determinant.}
Hence
\begin{equation}
\label{eq:Gret_sol_constcoeff}
G_r(n,m) = \frac{1}{c_0(n)}\frac{B_{d-1}(n-m+d-1)}{B_{d-1}(d-1)} = B_{d-1}(n-m+d-1) \,.
\end{equation}
This matches eq.~\eqref{eq:Gret_const_coeff}.
Next process the advanced Green's function.
By the same logic as above, shift all the indices by $-m$ and treat $\delta_{0,n}$.
Also employ the canonical basis functions, but arrange the columns as shown below.
Observe that
\begin{equation}
\Delta_a(n-m) = \left|\begin{array}{llll}
B_{d-1}(d-1) & \dots & B_0(d-1) \\
\vdots & \dots & \vdots \\
B_{d-1}(1) & \dots & B_0(1) \\
B_{d-1}(n-m) & \dots & B_0(n-m) \end{array}\right|\,.
\end{equation}
The only nonzero terms in the first $d-1$ rows are the diagonal terms, and they all equal unity.
\emph{This is why we require the canonical basis functions, and this time we order them with $B_{d-1}$ in the first column, etc.}
Hence
\begin{equation}
G_a(n,m) = \frac{1}{c_d(n)}\frac{B_0(n-m)}{B_0(0)}= -\frac{B_0(n-m)}{a_0} \,.
\end{equation}
For the case of constant coefficients, Wolfram proved that $B_0(n)=a_0B_{d-1}(n-1)$ (\cite{Wolfram2000}, Corollary 2.3). Hence
\begin{equation}
\label{eq:Gadv_sol_constcoeff}
G_a(n,m) = -B_{d-1}(n-m-1) \,.
\end{equation}
This matches eq.~\eqref{eq:Gadv_const_coeff}.

\setcounter{equation}{0}
\section{Example \#1}\label{sec:ex1}
Let us consider a simple example to demonstrate.
It is tempting to employ the Fibonacci recurrence, but let us use something different.
Consider the following recurrence, with an inhomogeneous term $r(n)=n$.
\begin{equation}
\label{eq:ex1}  
f(n) = 2f(n-1) -f(n-2) +n \,.
\end{equation}
The characteristic equation is $z^2-2z+1=(z-1)^2=0$, i.e.~a repeated root.
The solution of the homogeneous equation is $f(n) = u + vn$, where $u$ and $v$ are arbitrary constants.
The canonical basis solutions are $B_0(n)=1-n$ and $B_1(n)=n$.
We are interested only in the `forced' or `driven' solution generated by the inhomogeneous term $r(n)=n$.
Hence we ignore the complementary solution $C(n)$ below and solve for the particular solution $P(n)$, subject to $P(0)=P(1)=0$.
Note that the inhomogenous term $r(n)=n$ coincides with $B_1(n)$.
Hence there is a `resonance' of the driving term with a solution of the inhomogeneous recurrence.
It is easily verified that the solution is of $O(n^3)$.
\begin{equation}
\label{eq:sol_ex1}
P(n) = \frac16 n(n-1)(n+4) \,.
\end{equation}
Observe that $P(0)=P(1)=0$. Direct substitution into eq.~\eqref{eq:ex1} yields
\begin{equation}
P(n) -2P(n-1) +P(n-2) = \frac16n(n-1)(n+4) -\frac13(n-1)(n-2)(n+3) +\frac16(n-2)(n-3)(n+2) = n \,.
\end{equation}
Let us employ the retarded Green's function for $n\ge2$, using the second line of eq.~\eqref{eq:W_my_sol}, with $C(n)=0$.
\begin{equation}
\begin{split}
  f(n) &= \sum_{i=0}^{n-2} r(n-i)B_1(1+i) \\
  &= \sum_{i=0}^{n-2} (n-i)(i+1) \\
  &= \frac12n^2(n-1) -\frac13 n(n-1)(n-2) \\
  &= \frac16n(n-1)(n+4) \,.
\end{split}
\end{equation}
This matches eq.~\eqref{eq:sol_ex1}.
Next let us employ the advanced Green's function for $n<0$, using the last line of eq.~\eqref{eq:W_my_sol}, with $C(n)=0$.
\begin{equation}
\begin{split}
  f(n) &= -\sum_{i=1}^{-n} r(n+1+i)B_1(-i) \\
  &= \sum_{i=1}^{-n} (n+1+i)i \\
  &= \frac12 n(-n)(-n+1) +\frac13(-n)(-n+1)(-n+2) \\
  &= \frac12 n^2(n-1) -\frac13 n(n-1)(n-2) \\
  &= \frac16n(n-1)(n+4) \,.
\end{split}
\end{equation}
This matches eq.~\eqref{eq:sol_ex1}.

\setcounter{equation}{0}
\section{Example \#2}\label{sec:ex2}
We treat the same homogeneous recurrence in Sec.~\ref{sec:ex1}, but with a nonresonant driving term $r(n)=2^n$.
\begin{equation}
\label{eq:ex2}  
f(n) = 2f(n-1) -f(n-2) +2^n \,.
\end{equation}
The canonical basis solutions are the same: $B_0(n)=1-n$ and $B_1(n)=n$.
We are again interested only in the `forced' or `driven' solution generated by the inhomogeneous term $r(n)=1/n$.
Hence we again ignore the complementary solution $C(n)$ below and solve for the particular solution $P(n)$, subject to $P(0)=P(1)=0$.
The solution for the driven solution is
\begin{equation}
\label{eq:sol_ex2}
P(n) = 2^{n+2} -4B_0(n) -8B_1(n) = 2^{n+2} -4 -4n \,.
\end{equation}
Observe that $P(0)=0$ and $P(1)=0$, as required.
The functions $B_0(n)$ and $B_1(n)$ do not contribute to the inhomogenous equation.
Substitute $f(n)=2^{n+2}$ and observe that
\begin{equation}
f(n)-2f(n-1)+f(n-2) = 2^{n+2} -2\times2^{n+1} +2^n = 2^n \,.
\end{equation}
Next let us employ the retarded Green's function for $n\ge 2$, using the second line of eq.~\eqref{eq:W_my_sol}, with $d=2$ and $C(n)=0$.
\begin{equation}
\begin{split}
  f(n) &= \sum_{i=0}^{n-2} r(n-i)B_1(1+i) \\
  &= \sum_{i=0}^{n-2} 2^{n-i}(i+1) \\
  &= 2^n\Bigl(2 -\frac{n}{2^{n-2}}\Bigr) +2^{n+1}-4 \\
  &= 2^{n+2} -4n -4 \,.
\end{split}
\end{equation}
This matches eq.~\eqref{eq:sol_ex2}.
Next let us employ the advanced Green's function for $n<0$, using the last line of eq.~\eqref{eq:W_my_sol}, with $d=2$ and $C(n)=0$.
\begin{equation}
\begin{split}
  f(n) &= -\sum_{i=1}^{|n|} r(n+1+i)B_d(-i) \\
  &= \sum_{i=1}^{|n|} 2^{n+1+i}i \\
  &= 2^{n+2}(2^{|n|} |n|-2^{|n|}+1) \\
  &= 2^{n+2}(-2^{-n} n -2^{-n}+1) \\
  &= 2^{n+2} -4n -4 \,.
\end{split}
\end{equation}
This matches eq.~\eqref{eq:sol_ex2}.

\setcounter{equation}{0}
\section{Example \#3}\label{sec:ex3}
Next let us consider a linear difference equation with variable coefficients.
\begin{equation}
\label{eq:ex3}
  (2n-1)f(n) -4nf(n-1) +(2n+1)f(n-2) = 3 \,.
\end{equation}
It is easily verified that two linearly independent solutions of the homogeneous equation are $F_0(n)=1$ and $F_1(n)=(n+1)^2$.
They are \emph{not} canonical basis solutions.
Observe that the driving term $r(n)=3$ is resonant with $F_0(n)$.
We again solve only for the driven solution $P(n)$, with the constraints $P(0)=P(1)=0$.
The answer is
\begin{equation}
P(n) = \frac{n(n-1)}{2} \,.
\end{equation}
This can be verified by direct substitution.
Say the left-hand side of eq.~\eqref{eq:ex3} is $L$, then
\begin{equation}
\begin{split}
  L &= (2n-1)P(n) -4nP(n-1) +(2n+1)P(n-2) \\
  &= (2n-1)\frac{n(n-1)}{2} -2n(n-1)(n-2) +(2n+1)\frac{(n-2)(n-3)}{2} \\
  &= \frac12\bigl[ (2n-1)n(n-1) -4n(n-1)(n-2) +(2n+1)(n-2)(n-3) \bigr] \\
  &= 3 \,.
\end{split}
\end{equation}
Let us derive the particular solution using Green's functions.
From eq.~\eqref{eq:Gret_sol}, for $n\ge2$ the retarded Green's function is
\begin{equation}
\begin{split}
G_r(n,m) &= \frac{1}{c_0(m)}\frac{F_0(m-1)F_1(n)-F_1(m-1)F_0(n)}{F_0(m-1)F_1(m)-F_1(m-1)F_0(m)} \\
&= \frac{1}{2m-1}\frac{(n+1)^2-m^2}{(m+1)^2-m^2} \\
&= \frac{(n+1)^2-m^2}{(2m-1)(2m+1)} \,.
\end{split}
\end{equation}
Then sum to obtain the particular solution
\begin{equation}
\begin{split}
  P_r(n) &= \sum_{m=d}^n G_r(n,m)r(m)
  \\
  &= 3\sum_{m=2}^n \frac{(n+1)^2-m^2}{(2m-1)(2m+1)} \,.
\end{split}
\end{equation}
Process the sums separately. The first sum is
\begin{equation}
\begin{split}
  2S_1 &= \sum_{m=2}^n \frac{2}{(2m-1)(2m+1)} \\
  &= \sum_{m=2}^n \biggl(\frac{1}{2m-1} -\frac{1}{2m+1}\biggr) \\
  &= \frac13 - \frac{1}{2n+1} \\
  &= \frac23\frac{n-1}{2n+1} \,.
\end{split}
\end{equation}
The second sum is
\begin{equation}
\begin{split}
  4S_2 &= \sum_{m=2}^n \frac{4m^2}{4m^2-1} \\
  &= \sum_{m=2}^n \Bigl(1 +\frac{1}{4m^2-1}\Bigr) \\
  &= n-1 +S_1 \\
  &= n-1 +\frac13\frac{n-1}{2n+1} \\
  &= \frac23\frac{(3n+2)(n-1)}{2n+1} \,.
\end{split}
\end{equation}
The total is
\begin{equation}
\begin{split}
  P_r(n) &= (n+1)^2\frac{n-1}{2n+1} - \frac12\frac{(3n+2)(n-1)}{2n+1} \\
  &= \frac{n-1}{2}\frac{2(n+1)^2-3n-2}{2n+1} \\
  &= \frac{n(n-1)}{2} \,.
\end{split}
\end{equation}
This is correct.
Next process the case $n\le-1$.
From eq.~\eqref{eq:Gadv_sol}, the advanced Green's function is
\begin{equation}
\begin{split}
G_a(n,m) &= \frac{1}{c_2(m+2)}\frac{F_0(m+1)F_1(n)-F_1(m+1)F_0(n)}{F_0(m+1)F_1(m)-F_1(m+1)F_0(m)} \\
&= \frac{1}{2m+5}\frac{(n+1)^2-(m+2)^2}{(m+1)^2-(m+2)^2} \\
&= -\frac{(n+1)^2-(m+2)^2}{(2m+3)(2m+5)} \,.
\end{split}
\end{equation}
Then sum to obtain the particular solution
\begin{equation}
\begin{split}
  P_a(n) &= \sum_{m=n}^{-1} G_r(n,m)r(m)
  \\
  &= -3\sum_{m=n}^{-1} \frac{(n+1)^2-(m+2)^2}{(2m+3)(2m+5)}
  \\
  &= -3\sum_{j=1}^{|n|} \frac{(|n|-1)^2-(j-2)^2}{(2j-3)(2j-5)} \,.
\end{split}
\end{equation}
Process the sums separately. The first sum is
\begin{equation}
\begin{split}
  2S_3 &= \sum_{j=1}^{|n|} \frac{2}{(2j-3)(2j-5)} \\
  &= \sum_{j=1}^{|n|} \biggl(\frac{1}{2j-5} -\frac{1}{2j-3}\biggr) \\
  &= -\Bigl(\frac13 + \frac{1}{2|n|-3}\Bigr) \\
  &= -\frac23\frac{|n|}{2|n|-3} \,.
\end{split}
\end{equation}
The second sum is
\begin{equation}
\begin{split}
  4S_4 &= \sum_{j=1}^{|n|} \frac{4(j-2)^2}{(2j-3)(2j-5)} \\
  &= \sum_{j=1}^{|n|} \frac{4j^2-16j+16}{4j^2-16j+15} \\
  &= \sum_{j=1}^{|n|} \Bigl(1 +\frac{1}{4j^2-16j+15}\Bigr) \\
  &= |n| +S_3 \\
  &= |n| -\frac13\frac{|n|}{2|n|-3} \\
  &= \frac23\frac{3|n|^2-5|n|}{2|n|-3} \,.
\end{split}
\end{equation}
The total is
\begin{equation}
\begin{split}
  P_r(n) &= (|n|-1)^2\frac{|n|}{2|n|-3} + \frac12\frac{3|n|^2-5|n|}{2|n|-3} \\
  &= \frac{|n|}{2}\frac{2(|n|-1)^2 +3|n|-5}{2|n|-3} \\
  &= \frac12|n|(|n|+1) \\
  &= \frac{n(n-1)}{2} \,.
\end{split}
\end{equation}
This is correct.

\section{Conclusion}\label{sec:conc}
This note leverages the Green's function formalism from the theory of inhomogeneous linear differential equations
and applies it to inhomogeneous linear difference equations.
The advantage of a Green's function formalism is that it can be employed to solve a linear difference equation for \emph{any} inhomogeneous function $r(n)$.
It is only necessary to determine a set of fundamental solutions of the homogeneous equation once.
After that is done (which is admittedly a nontrivial task for linear difference equations with variable coefficients),
one can solve for an arbitrary inhomogeneous function $r(n)$.
It is not necessary to solve the difference equation \emph{ab initio} for each inhomogeneous function $r(n)$.
The Green's function formalism is a general-purpose technique for solving inhomogeneous linear difference equations.

\newpage
\section*{Acknowledgements}
The author is grateful to David Wolfram for bringing Ref.~\cite{Wolfram2000} to his attention and stimulating his interest in this problem.

\bibliographystyle{amsplain}

\end{document}